\def\timestamp{%
Time-stamp: <erdos-homogeneity.tex: Monday 12-02-2018 at 21:31:59 (cet)>}
\def\stripname Time-stamp: <#1 #2>{#2}
\edef\filedate{\expandafter\stripname\timestamp}
\newcommand\JFM[1]{ JFM~#1}
\DeclareMathSymbol\N 0{AMSb}{`N}
\DeclareMathSymbol\PP 0{AMSb}{`P}
\DeclareMathSymbol\R 0{AMSb}{`R}
\DeclareMathSymbol\Q 0{AMSb}{`Q}
\let\emptyset \undefined
\let\ge       \undefined
\let\le       \undefined
\DeclareMathSymbol\restr\mathbin{AMSa}{"16}
\DeclareMathSymbol\le   \mathrel{AMSa}{"36}    
\DeclareMathSymbol\ge   \mathrel{AMSa}{"3E}    
\DeclareMathSymbol\emptyset\mathord{AMSb}{"3F}
\newcommand\cee{\mathfrak{c}}
\newcommand\orpr[2]{\langle{#1},{#2}\rangle}
\newcommand\ceeseq[1]{\langle{#1}_\alpha:\alpha<\cee\rangle}
\newcommand\ceeseqq[1]{\langle{#1}:\alpha<\cee\rangle}
\newcommand\bigceeseqq[1]{\bigl<{#1}:\alpha<\cee\bigr>}
\newcommand\norm[1]{\lVert#1\rVert}
\newcommand\bignorm[1]{\bigl\|#1\bigr\|}
\newcommand\abs[1]{\lvert#1\rvert}
\newcommand\preim{^\gets}
\let\epsilon\varepsilon
\newtheorem{theorem}{Theorem}[section]
\newtheorem{lemma}[theorem]{Lemma}
\numberwithin{equation}{section}
\theoremstyle{definition}
\newtheorem*{question}{Question}
\theoremstyle{remark}
\newtheorem{remark}[theorem]{Remark}
\DeclareSymbolFont{cmmib}{OML}{cmm}{b}{it}
\def\newboldletter#1#2{\DeclareMathSymbol#1{0}{cmmib}{`#2}}
\begin{document}

\title[Homogeneity and rigidity in Erd\H{o}s spaces]%
      {Homogeneity and rigidity in Erd\H{o}s spaces}

\author{Klaas Pieter Hart}

\address{Faculty EEMCS\\TU Delft\\
         Postbus 5031\\2600~GA {} Delft\\the Netherlands}
\email{k.p.hart@tudelft.nl}
\urladdr{http://fa.its.tudelft.nl/\~{}hart}

\author{Jan van Mill}
\address{KdV Institute for Mathematics\\
         University of Amsterdam\\
         P.O. Box 94248\\
         1090~GE {} Amsterdam\\
         The Netherlands}
\email{j.vanmill@uva.nl}

\date{\filedate}

\dedicatory{To the memory of Bohuslav Balcar}

\keywords{Erd\H{o}s spaces, homogeneity, rigidity, spheres}

\subjclass{Primary: 54F99. 
           Secondary: 46A45, 54B05, 54D65, 54E50, 54F50}

\begin{abstract}
We investigate the homogeneity of topological subspaces of separable 
Hilbert space, akin to the spaces with all points rational or all points
irrational, so-called Erd\H{o}s spaces.
We provide a non-homogeneous example, that is based on one set of 
coordinates using, and a rigid example, based on a sequence of coordinate sets.
\end{abstract}

\maketitle

\section*{Introduction}

We let $\ell_2$ denote real separable Hilbert space, that is
$$
\ell_2=\{\vx\in\R^\infty:\sum_{i\in\omega}x_i^2<\infty\}.
$$
In this paper we consider (topological) subspaces of $\ell_2$ that
are defined by taking a sequence $\vX=\langle X_i:i\in\omega\rangle$ of subsets
of~$\R$ and then defining
$$
E(\vX)= \{\vx\in\ell_2:(\forall i)(x_i\in X_i)\}.
$$
If all $X_i$ are equal to one fixed set~$X$ we simply write $E(X)$.
Since $E(\R)$ is just $\ell_2$ itself we henceforth tacitly assume 
that $X\neq\R$ when we deal with a single set~$X$.

These subspaces are generally known as Erd\H{o}s spaces because Erd\H{o}s
showed that $E(S)$ and $E(\Q)$~are a natural examples of totally disconnected
spaces of dimension~one that are also homeomorphic to their own 
squares~\cite{MR0003191}.
Here $S$ denotes the convergent sequence $\{\frac1n:n\in\N\}\cup\{0\}$ 
and $\Q$~denotes the set of rational numbers.
These two spaces have been the object of intense study, Chapter~2 
of~\cite{MR2742005} summarizes much of the earlier history and contains 
references to, among others, a proof that $E(S)$ and~$E(\PP)$ are homeomorphic,
where $\PP$~denotes the set of irrational numbers.

The purpose of this paper is to see what can be said of the spaces~$E(X)$
and $E(\vX)$ in terms of homogeneity and rigidity.
One might think that $E(X)$ is always homogeneous, certainly in the
light of the result of Lawrence from~\cite{MR1458308} that states that 
an infinite power of a zero-dimensional subspace of~$\R$ is always homogeneous.
There are two important differences though: $X^\omega$~is a much larger subset
of~$\R^\omega$ than~$E(X)$, and the topology of~$E(X)$ is finer than
the product topology. 

The standard examples $E(\Q)$ and $E(\PP)$ are homogeneous, they are
even (homeomorphic to) topological groups.
Note that this shows that $E(S)$~is homogeneous, even though $S$~is not 
of course.
 
In the case of a single set one can say for certain that $E(X)$ is not rigid:
any permutation of~$\omega$ induces an autohomeomorphism of~$E(X)$.
These are not the only `easy' autohomeomorphisms of~$E(X)$: 
assume there is a real number~$r$ not in $X$ such that both $X\cap(r,\infty)$
and $X\cap(-\infty,r)$ are nonempty and consider the clopen subset
$C=\{\vx\in E(X): x_0,x_1>r\}$ of~$E(X)$.
One can define $f:E(X)\to E(X)$ to be the identity outside~$C$ and
have $f(\vx)=(x_1,x_0,x_2,\ldots)$ if~$\vx\in C$.
If, as is quite often the case, $X$~has a dense complement in~$\R$ then
one can create many `easy' autohomeomorphisms in this way and we are forced
to conclude that the notion of a `trivial' autohomeomorphism of~$E(X)$ may be 
hard to pin down.

In Section~\ref{sec.norm-pres} we construct a subset~$X$ of~$\R$ such that
$E(X)$~has a rather small set of autohomeomorphisms: all of them
must be norm-preserving.

This of course raises the question whether this can be sharpened to:
every autohomeomorphism of~$E(X)$ must be norm-preserving \emph{and}
all spheres centered at the origin are homogeneous.
We will comment on this after the construction.

To obtain a truly rigid space of the form $E(\vX)$ one must have all 
sets~$X_i$ distinct, for otherwise exchanging two coordinates would result
in a non-trivial autohomeomorphism.
In Section~\ref{sec.rigid} we exhibit a sequence~$\vX$ for which 
$E(\vX)$~is rigid.

The constructions use Sierpi\'nski's method of killing homeomorphisms
from~\cite{zbMATH02550962}, which in turn is based on Lavrentieff's 
theorem from~\cite{Lavrentieff1924}.
The latter theorem states that a homeomorphism between two subsets, 
$A$ and~$B$, of a metrizable space can be extended to a homeomorphism
between $G_\delta$-subsets, $A^*$ and~$B^*$, that contain $A$ and~$B$,
respectively.
It is well-known that a separable metric space, like $\ell_2$, contains
continuum many $G_\delta$-subsets and that each such set admits continuum many
continuous functions into~$\ell_2$.
As will be seen below this will allow us to kill all unwanted homeomorphisms
is a recursive construction of length~$\cee$.

We shall conclude this note with some questions and suggestions for 
further research.

\section{A non-homogeneous Erd\H{o}s space}
\label{sec.norm-pres}

We shall show that there is a subset~$X$ of~$\R$ for which $E(X)$~is 
not homogeneous.
In fact our~$X$ will be such that the autohomeomorphisms of~$E(X)$ must
be norm-preserving.
As observed in the introduction we cannot go all the way and make $E(X)$ rigid:
every permutation of~$\N$ induces a unitary operator on~$\ell_2$ that maps 
$E(X)$ to itself.
Thus, the autohomeomorphism group of~$E(X)$ contains, at least, the
symmetry group~$S_\N$.

\smallskip
We shall construct a dense subset~$X$ of~$\R$ in a recursion of length~$\cee$.
The set~$E(X)$ will then be dense in~$\ell_2$.
If $f:E(X)\to E(X)$ is an autohomeomorphism then we can apply Lavrentieff's
theorem to find a $G_\delta$-set~$A$ that contains~$E(X)$ and an 
autohomeomorphism~$\bar f$ of~$A$ that extends~$f$.
By continuity the map~$\bar f$ is norm-preserving iff $f$~is.
This tells us how we can ensure that $E(X)$~has norm-preserving 
autohomeomorphisms only: make sure that whenever $A$~is a dense $G_\delta$-subset
of~$\ell_2$ that contains~$E(X)$ and $f:A\to A$ is an autohomeomorphism that 
is \emph{not} norm-preserving then $E(X)$~is \emph{not} invariant under~$f$.

To make our construction run a bit smoother we note that it suffices to ensure
that every autohomeomorphism $f:E(X)\to E(X)$ does not increase norms anywhere,
that is, it satisfies $\norm{\vx}\ge\bignorm{f(\vx)}$ for all~$\vx$.
For if $f$~is an autohomeomorphism then so is its inverse~$f^{-1}$ and
from $(\forall\vx)\bigl(\norm{\vx}\ge\bignorm{f^{-1}(\vx)}\bigr)$
we then deduce $(\forall\vx)\bigl(\bignorm{f(\vx)}\ge\norm{\vx}\bigr)$.

\smallskip
We enumerate the set of pairs $\orpr Af$, where $A$~is a dense
$G_\delta$-subset of~$\ell_2$ and $f$~is an autohomeomorphism of~$A$ that 
increases the norm somewhere as~$\bigceeseqq{\orpr{A_\alpha}{f_\alpha}}$.

By transfinite recursion we build increasing sequences~$\ceeseq{X}$ 
and~$\ceeseq{Y}$ subsets of~$\R$ such that for all~$\alpha$
\begin{enumerate}
\item $|X_\alpha\cup Y_\alpha|<\cee$, 
\item $X_\alpha\cap Y_\alpha=\emptyset$, and
\item if $E(X_\alpha)\subseteq A_\alpha$ then there is a point~$\vx_\alpha$ 
      in~$A_\alpha$ such that $X_{\alpha+1}$ consists of~$X_\alpha$ and
      the coordinates of~$\vx_\alpha$, and 
      $Y_{\alpha+1}$~consists of~$Y_\alpha$ and at least one coordinate 
      of~$f_\alpha(\vx_\alpha)$.
\end{enumerate}
To see that this suffices let $X=\bigcup_{\alpha<\cee}X_\alpha$ and 
assume $f$~is an autohomeomorphism of~$E(X)$ that increases the norm
of at least one point.
Apply Lavrentieff's theorem to extend~$f$ to an autohomeomorphism~$\bar f$
of a $G_\delta$-set $A$ that contains~$E(X)$.
Then $A$~is dense and $\bar f$~increases the norm of at least one point
so there is an~$\alpha$ such that $\orpr{A}{\bar f}=\orpr{A_\alpha}{f_\alpha}$.
But now consider the point~$\vx_\alpha$.
It belongs to $E(X_{\alpha+1})$ and hence to~$E(X)$; 
on the other hand one of the coordinates of~$f_\alpha(\vx_\alpha)$ belongs 
to~$Y_{\alpha+1}$ and it follows that $f_\alpha(\vx_\alpha)\notin E(X)$
as $Y_{\alpha+1}\cap X=\emptyset$.
This shows that $\bar f$ does not extend~$f$, 
as $f(\vx_\alpha)$~must be in~$E(X)$, which is a contradiction.

To start the construction let $X_0=\Q$, to ensure density of~$E(X)$,
and $Y_0=\emptyset$.

At limit stages we take unions, so it remains to show what to do at 
successor stages.

To avoid having to carry the index $\alpha$ around all the time we formulate
the successor step as the following lemma, in which $Z$ plays the role
of the union~$X_\alpha\cup Y_\alpha$.

\begin{lemma}\label{lemma.successor.step}
Let $A$ be a dense $G_\delta$-subset of~$\ell_2$ and let $f:A\to A$ be 
an autohomeomorphism that increases the norm of at least one point.
Furthermore let $Z$ be a subset of\/~$\R$ of cardinality less than~$\cee$.
Then there is a point~$\vx$ in~$A$ such that 
\begin{enumerate}
\item none of the coordinates of $\vx$ and $f(\vx)$ are in $Z$, and
\item at least one coordinate of~$f(\vx)$ is not among the coordinates
      of~$\vx$ itself.
\end{enumerate}
\end{lemma}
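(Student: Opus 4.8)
The plan is to localize with continuity, extract the new coordinate from the strict growth of the norm, and only then choose $\vx$ inside $A$ while dodging~$Z$. Since $f$ increases the norm somewhere, the continuous function $\vy\mapsto\bignorm{f(\vy)}-\norm{\vy}$ is positive at some point, hence positive on $U\cap A$ for a small open ball $U\subseteq\ell_2$; here $U\cap A\ne\emptyset$ because $A$ is dense. All further work takes place in $U\cap A$, where $\bignorm{f(\vx)}>\norm{\vx}$ holds throughout, and which, being a nonempty open subset of the completely metrizable space~$A$, is a Baire space of cardinality~$\cee$.

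For the second conclusion I would argue that a new coordinate must appear somewhere on $U\cap A$, for otherwise $f$ could not increase the norm. Suppose instead that for every $\vx\in U\cap A$ each coordinate of $f(\vx)$ already occurs among the coordinates of~$\vx$. Writing $h_{i,j}(\vx)=f(\vx)_j-x_i$, this says $\vx\in\bigcap_j\bigcup_i h_{i,j}\preim(0)$. If for each $j$ some $h_{i(j),j}$ vanished on a whole relatively open piece of $A$, then on that piece $f$ would be the reindexing $\vx\mapsto\langle x_{i(j)}\rangle_j$, which, $f$ being injective, is a permutation of the coordinates and therefore norm preserving, contradicting the strict inequality. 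Hence the zero sets $h_{i,j}\preim(0)$ are nowhere dense for the relevant indices, and the set $G$ of $\vx\in U\cap A$ at which some coordinate of $f(\vx)$ is genuinely new is comeager in $U\cap A$. Making this reindexing step precise, without appealing to connectedness of the possibly totally disconnected set~$A$, is the first point requiring care.

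The coordinates of $\vx$ itself are easy to keep out of $Z$: as $\abs Z<\cee$ the complement $\R\setminus Z$ is dense and of size~$\cee$, so while building $\vx\in A$ as the limit of a Cauchy sequence threaded through the open dense sets whose intersection is the $G_\delta$ set~$A$, one can nudge each decided coordinate into $\R\setminus Z$ without leaving~$U$. The coordinates of $f(\vx)$ are the hard ones. For a fixed $z$ the set $\{\vx\in A: z\text{ is a coordinate of }f(\vx)\}=f\preim\bigl(\bigcup_j\{\vy:y_j=z\}\bigr)$ is a countable union of nowhere dense sets, hence meager in~$A$; what must be shown nonempty is the intersection of $G$ with the $\abs Z$ complements of these meager sets, together with the easy requirement on the coordinates of~$\vx$.

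This last intersection is the main obstacle. We are deleting a union of $\abs Z<\cee$ meager sets, and in $\ZFC$ a union of fewer than $\cee$ meager sets may already cover a Polish space (for instance when $\mathrm{cov}(\mathcal M)<\cee$), so bare Baire category will not do. My strategy would be to use the special shape of the forbidden sets rather than mere meagreness: the sets $\{x_i=z\}$ are coordinate hyperplanes, fewer than $\cee$ of which cannot cover $U$, while the sets $\{f(\vx)_j=z\}$ are their homeomorphic preimages, to be controlled through the continuity of~$f$ and the fact that $f(\vx)\in\ell_2$ has a small tail, so that only finitely many coordinates of $f(\vx)$ are large at any scale and the remaining ones can be settled in the limit. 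Carrying out a coordinate-by-coordinate recursion of length below~$\cee$ that simultaneously keeps $\vx$ inside the merely $G_\delta$ set~$A$ (through which one cannot run straight segments), preserves the strict increase of the norm, routes all coordinates of $\vx$ and of $f(\vx)$ around~$Z$, and retains the new coordinate supplied by~$G$, is where I expect essentially all the difficulty to lie.
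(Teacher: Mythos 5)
Your argument for conclusion (2) has a genuine gap, and the intermediate claim it rests on is false at the level of generality at which you argue. You use only injectivity, continuity, and the strict norm inequality on $U\cap A$; but consider $f(\vx)=(x_0,x_0,x_1,x_2,\ldots)$ restricted to a ball on which $x_0$ is bounded away from~$0$. This map is injective, continuous, and satisfies $\bignorm{f(\vx)}^2=\norm{\vx}^2+x_0^2>\norm{\vx}^2$ everywhere, yet every coordinate of $f(\vx)$ occurs among the coordinates of~$\vx$, so your set $G$ is empty. In particular your inference that an injective reindexing $\vx\mapsto\langle x_{i(j)}\rangle_j$ must be a permutation of the coordinates is wrong: here $i(0)=i(1)=0$ and the reindexing increases the norm. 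What rescues the lemma is surjectivity of $f$ onto the dense set~$A$ (the image of the map above lies in the nowhere dense set $\{y_0=y_1\}$, so it is not an autohomeomorphism of any dense $G_\delta$), and your argument never invokes it. The Baire step is also broken as you half-suspect: the open piece on which $h_{i(j),j}$ vanishes depends on~$j$, and the intersection of these pieces over all~$j$ may have empty interior, so you never obtain a single piece on which $f$ is a full reindexing. The paper avoids all of this by arguing at one point: since $f$ maps $A$ \emph{onto} $A$, the preimages under $f$ of the closed nowhere dense sets $\{\vx:x_i=0\}$ and $\{\vx:x_i=x_j\}$ are nowhere dense in~$A$, so one can choose $\va$ with $\norm{\va}<\bignorm{f(\va)}$ such that the coordinates of $f(\va)$ are distinct and nonzero; if every $f(\va)_i$ equaled some $a_{k_i}$, distinctness would make $i\mapsto k_i$ injective and force $\bignorm{f(\va)}^2\le\norm{\va}^2$, a contradiction. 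Then, since $\lim_j a_j=0$ and $f(\va)_i\neq0$, one has $\abs{f(\va)_i-a_j}\ge3\varepsilon$ uniformly in~$j$, and continuity propagates the new coordinate to every $\vx$ in a ball $B(\va,\delta)$.

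For conclusion (1) you correctly diagnose that Baire category cannot delete $\abs{Z}<\cee$ meager sets, but your proposed transfinite, coordinate-by-coordinate recursion is only a statement of intent; as you yourself concede, essentially all the difficulty remains. The missing tool, which is the heart of the paper's proof, is Lemma~4.2 of van Douwen~\cite{MR742164}: in the Polish space $M=B(\va,\delta)\cap A$ the point-preimages of the countably many continuous maps $\pi_j:\vx\mapsto x_j$ and $\rho_j:\vx\mapsto f(\vx)_j$ are nowhere dense, and the lemma then produces a Cantor set $K\subseteq M$ on which all of these maps are simultaneously injective. Injectivity converts avoidance into counting: each $z\in Z$ is the $\pi_j$- or $\rho_j$-value of at most one point of~$K$ for each~$j$, so at most $\abs{Z}\cdot\omega<\cee$ points of $K$ are bad, and any of the remaining $\cee$ many points of $K$ satisfies both conclusions at once. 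This replaces your recursion, your covering considerations for hyperplanes, and your unproved comeagerness of $G$ in a single stroke.
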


\begin{proof}
We take $\va\in A$ such that $\norm{\va}<\bignorm{f(\va)}$.

First we show that we can assume, without loss of generality, that $\va$~has
two additional properties:
1)~all coordinates of~$f(\va)$ are non-zero, and
2)~all coordinates of~$f(\va)$ are distinct.

This follows from the fact that the following sets are closed and nowhere 
dense in~$\ell_2$:
\begin{enumerate}
\item $\{\vx\in\ell_2:x_i=0\}$ for every~$i$, and
\item $\{\vx\in\ell_2:x_i\neq x_j\}$ whenever $i<j$.
\end{enumerate}
By continuity and because $A$ is a dense $G_\delta$-subset of~$\ell_2$ we may 
choose $\va$ so that $f(\va)$ is not in any one of these sets.

We claim that there is an $i\in\N$ such that $f(\va)_i\neq a_j$ for all~$j$.  
If not then there is for each~$i$ a (smallest) $k_i$ such 
that $f(\va)_i=a_{k_i}$.
Because all coordinates of~$f(\va)$ are distinct the map $i\mapsto k_i$ must 
be injective.
But then
$$
\sum_{i=0}^\infty f(\va)_i^2 = \sum_{i=0}^\infty a_{k_i}^2\le \sum_{j=0}^\infty a_j^2,
$$
which contradicts our assumption that $\bignorm{f(\va)}>\norm{\va}$.

Fix an $i$ as above.
Since $\lim_ia_i=0$ and $f(\va)_i\neq0$ we can take $\varepsilon>0$ such that
$\abs{f(\va)_i-a_j}\ge3\varepsilon$ for all~$j$.

By continuity we can take $\delta>0$ such that $\delta\le\varepsilon$ 
and such that 
$\norm{\vx-\va}<\delta$ implies $\bignorm{f(\vx)-f(\va)}<\varepsilon$.

By the triangle inequality we find that when $\norm{\vx-\va}<\delta$
we have $\abs{f(\vx)_i-x_j}\ge\varepsilon$ for all~$j$.

Now we apply Lemma~4.2 from~\cite{MR742164}.
The conditions of this lemma are that we have a separable completely metrizable
space, for this we take $M=B(\va,\delta)\cap A$.
Next we need a family of countably many continuous functions to one space, 
for this we take the coordinate maps $\pi_j:\vx\mapsto x_j$ and their
compositions with~$f$, that is $\rho_j:\vx\mapsto f(\vx)_j$; the codomain
is the real line~$\R$.
Finally, we need to know that whenever $C\subseteq\R$ is countable the
complement of
$$
\bigcup_{j\in\omega}\bigl(\pi_j\preim[C]\cup\rho_j\preim[C]\bigr)
$$
in~$M$ is not countable.
This is true because the preimages of points under the~$\pi_j$ and the~$\rho_j$
are nowhere dense, so that the complement is a dense $G_\delta$-subset of~$M$.

The conclusion then is that there is a (copy of the) Cantor set~$K$ 
inside $B(\va,\delta)\cap A$ such that all maps~$\pi_j$ and~$\rho_j$
are injective.

Because $|K|=\cee$ this then yields many points $\vx\in K$ such that
$x_j,f(\vx)_j\notin Z$ for all~$j$.
All these points are as required.
\end{proof}

\begin{remark}\label{rem.spheres}
One would like to make this example as sharp as possible, for example by 
making all spheres centered at the origin homogeneous.
This seems harder than one might think at first.
Some straightforward modifications of the construction in this section 
will go agley.

One might try to add some unitary operators as autohomeomorphisms of~$E(X)$.
To keep the sets $X_\alpha$ and $Y_\alpha$ small these should introduce as 
few new coordinates as possible.
But even a simple transformation of the first two coordinates as given by
$u_0=\frac35x_0+\frac45x_1$ and $u_1=-\frac45x_0+\frac35x_1$ is potentially
quite dangerous.
For if $x_0=x_1$ then $u_0=\frac95x_0$ and $u_1=-\frac15x_0$.
This shows that if $E(X)$ is to be invariant under just this operator the
set $X$ itself must be invariant under scaling by~$\frac95$ and~$-\frac15$.
This would introduce norm-changing autohomeomorphisms of~$E(X)$.

Another possibility would be to make $E(X)$ invariant under 
standard reflections:
if $\norm\va=\norm\vb$ then
$$
R(\vx)=\vx-2\frac{\vx\cdot(\va-\vb)}{(\va-\vb)\cdot(\va-\vb)}(\va-\vb)
$$
defines the reflection in the perpendicular bisector of~$\va$ and~$\vb$.
Unfortunately this would mean that as soon as $X_\alpha$ is infinite
and dense there would be $\cee$~many of these maps and hence $\cee$~many
coordinates to avoid.
This would make it quite difficult to keep the sets $X$ and~$Y$ above disjoint.
\end{remark}

\begin{remark}
It is relatively easy to create situations where some spheres are not 
homogeneous.
Simply take a set $X$ that has $0$ as an element \emph{and} as an accumulation
point, and an isolated point~$x$, e.g., the convergent sequence~$S$ mentioned 
in the introduction.
In~$E(X)$ the sphere $H=\{\vy\in E(X):\norm\vy=\abs{x}\}$ is not homogeneous.

Indeed, the point $\vx=\langle x,0,0,\ldots\rangle$ is isolated in~$H$.
To see this take $\epsilon>0$ such that $\{x\}=X\cap(x-\epsilon,x+\epsilon)$
and consider any $\vy\in H\setminus\{\vx\}$.
Then $y_0\neq x$ because $\sum_iy_i^2=x^2$, hence $\abs{y_0-x}\ge\epsilon$
and also $\norm{\vy-\vx}\ge\epsilon$. 

On the other hand, using a non-trivial convergent sequence in~$X$ with
limit~$0$ it is an elementary exercise to construct a non-trivial
convergent sequence in~$H$.
\end{remark}

\section{A rigid example}
\label{sec.rigid}

In this section we construct a rigid Erd\H{o}s space.
As noted before, in this case we need a sequence 
$\vX=\langle X_i:i\in\omega\rangle$
of subsets of~$\R$ simply because we need to disallow permutations of 
coordinates as autohomeomorphisms.

The construction is similar to, but easier than, that in 
Section~\ref{sec.norm-pres}.
We list the set of pairs $\orpr Af$, where $A$ is a dense $G_\delta$-subset
of~$\ell_2$ and $f:A\to A$ is a homeomorphism that is not the identity,
as~$\bigceeseqq{\orpr{A_\alpha}{f_\alpha}}$.

We now build countably many increasing sequences $\ceeseqq{X_{i,\alpha}}$
of subsets of~$\R$, one for each~$i$ and one countably many auxiliary 
sequences~$\ceeseqq{Y_{i,\alpha}}$ such that
$Y_{i,\alpha}\cap X_{i,\alpha}=\emptyset$ for all~$i$ and all~$\alpha$.

We start with a sequence $\langle X_{i,0}:i\in\omega\rangle$
of pairwise disjoint countable dense subsets of~$\R$ 
and $Y_{i,0}=\emptyset$ for all~$i$.

At limit stages we take unions and at a successor stage we consider
$\vX_\alpha=\langle X_{i,\alpha}:i\in\omega\rangle$, the
corresponding Erd\H{o}s space $E(\vX_\alpha)$, 
and the pair~$\orpr{A_\alpha}{f_\alpha}$. 
In case $E(\vX_\alpha)\subseteq A_\alpha$ we take a point
$\va\in A_\alpha$ such that $f_\alpha(\va)\neq\va$ and we fix a coordinate~$j$ 
such that $a_j\neq f_\alpha(\va)_j$ and let $\epsilon=\abs{a_j-f(\va)_j}/2$.
Because $A_\alpha$ is a dense $G_\delta$-set there are many points in~$A_\alpha$
within distance~$\epsilon$ of~$f_\alpha(\va)$ whose $j$-th coordinate is not
in~$X_{j,\alpha}$; by continuity of the bijection~$f_\alpha$ we can
assume that $f_\alpha(\va)_j\notin X_{j,\alpha}$.

We then put $X_{i,\alpha+1}=X_{i,\alpha}\cup\{a_i\}$ for all~$i$,
and $Y_{i,\alpha+1}=Y_{i,\alpha}$ for all~$i\neq j$, 
and $Y_{j,\alpha+1}=Y_{j,\alpha}\cup\{f(\va)_j\}$.

In the end we let $X_i=\bigcup_{\alpha<\cee}X_{i,\alpha}$.

As in the previous section if $f$ is an autohomeomorphism of~$E(\vX)$
that is not the identity then there is an $\alpha$ such that 
$E(\vX)\subseteq A_\alpha$ and $f_\alpha$~extends~$f$.
However, for the point~$\va$ chosen at that stage we have $\va\in E(\vX)$
and $f_\alpha(\va)\notin E(\vX)$.

\section{Some questions}

In this last section we formulate two questions that we deem of particular
interest in the context of homogeneity and rigidity in Erd\H{o}s spaces.

\begin{question}
Given a subset $X$ of $\R$ with a dense complement, investigate
what the `trivial' autohomeomorphisms of~$E(X)$ should be.
\end{question}

This is the kind of question that one asks of any kind of structure:
what are the automorphisms?
Since at least one $E(X)$ has norm-preserving autohomeomorphisms only
we know that `trivial' should imply that property.

\begin{question}
Is there a set $X$ such that $E(X)$ has norm-preserving autohomeomorphisms
only \emph{and} such that all spheres centered at the origin are homogeneous?  
\end{question}

Note that one can split the last condition into two possibilities:
one can ask whether the spheres can be made homogeneous as spaces in their own
right or whether one can use autohomeomorphisms of~$E(X)$ to establish
their homogeneity.
The failed attempts described in Remark~\ref{rem.spheres} were of 
the latter kind.

\begin{bibdiv}
  
\begin{biblist}

\bib{MR2742005}{article}{
   author={Dijkstra, Jan J.},
   author={van Mill, Jan},
   title={Erd\H os space and homeomorphism groups of manifolds},
   journal={Memoirs of the American Mathematical Society},
   volume={208},
   date={2010},
   number={979},
   pages={vi+62},
   issn={0065-9266},
   isbn={978-0-8218-4635-3},
   review={\MR{2742005}},
}

\bib{MR742164}{article}{
   author={van Douwen, Eric K.},
   title={A compact space with a measure that knows which sets are
   homeomorphic},
   journal={Advances in Mathematics},
   volume={52},
   date={1984},
   number={1},
   pages={1--33},
   issn={0001-8708},
   review={\MR{742164}},
   doi={10.1016/0001-8708(84)90049-5},
}

\bib{MR0003191}{article}{
   author={Erd\H{o}s, Paul},
   title={The dimension of the rational points in Hilbert space},
   journal={Annals of Mathematics~(2)},
   volume={41},
   date={1940},
   pages={734--736},
   issn={0003-486X},
   review={\MR{0003191}},
   doi={10.2307/1968851},
}

\bib{Lavrentieff1924}{article}{
    author = {Lavrentieff, M. M.},
    title = {Contribution \`a la th\'eorie des ensembles hom\'eomorphes},
    journal = {Fundamenta Mathematicae},
    ISSN = {0016-2736; 1730-6329/e},
    volume = {6},
    pages = {149--160},
    date = {1924},
    language = {French},
    url={https://eudml.org/doc/214269},
}

\bib{MR1458308}{article}{
   author={Lawrence, L. Brian},
   title={Homogeneity in powers of subspaces of the real line},
   journal={Transactions of the American Mathematical Society},
   volume={350},
   date={1998},
   number={8},
   pages={3055--3064},
   issn={0002-9947},
   review={\MR{1458308}},
   doi={10.1090/S0002-9947-98-02100-X},
}

\bib{zbMATH02550962}{article}{
    author = {Sierpi\'nski, W.},
    title = {Sur un probl\`eme concernant les types de dimensions},
    journal = {Fundamenta Mathematicae},
    ISSN = {0016-2736; 1730-6329/e},
    volume = {19},
    pages = {65--71},
    date = {1932},
    language = {French},
    url={https://eudml.org/doc/212588},
    review = {\JFM{58.0632.04}}
}

\end{biblist}

\end{bibdiv}

\end{document}